\documentclass[12pt,reqno]{amsart}

\textwidth  6.15in
\textheight 8.25in
\voffset    -0.0in
\hoffset    -.6in

\newcommand{\F}{{\mathbb F}}

\newcommand{\cM}{{\mathcal M}}

\newcommand{\alp}{\alpha}
\newcommand{\sig}{\sigma}

\newcommand{\longc}{,\dotsc,}
\newcommand{\longp}{+\dotsb+}

\newcommand{\seq}{\subseteq}
\newcommand{\stm}{\setminus}

\newtheorem{lemma}{Lemma}
\newtheorem{theorem}{Theorem}
\newtheorem{corollary}{Corollary}

\newcommand{\refc}[1]{\ref{c:#1}}
\newcommand{\refl}[1]{\ref{l:#1}}
\newcommand{\reft}[1]{\ref{t:#1}}
\newcommand{\refs}[1]{\ref{s:#1}}
\newcommand{\refb}[1]{\cite{b:#1}}

\title{Sum-full sets are not zero-sum-free}

\author{Vsevolod F. Lev}
\email{seva@math.haifa.ac.il}
\address{Department of Mathematics, The University of Haifa at Oranim,
  Tivon 36006, Israel}

\author{J\'anos Nagy}
\thanks{JN was supported by the Lend\"ulet program of the Hungarian Academy
  of Sciences (MTA) and the NKFIH Grant \'Elvonal (Grant Nr. KKP126683).}
\email{janomo4@gmail.com}
\address{Alfr\'ed R\'enyi Institute of Mathematics and MTA-BME Lend\"ulet
  Arithmetic Combinatorics Research Group, E\"otv\"os Lor\'and Research Network.}

\author{P\'eter P\'al Pach}
\thanks{PPP was supported by the Lend\"ulet program of the Hungarian Academy
  of Sciences (MTA) and the National Research, Development and Innovation
  Office NKFIH (Grant Nr. K124171, K129335 and BME NC TKP2020).
 }
\email{ppp@cs.bme.hu}
\address{MTA-BME Lend\"ulet Arithmetic Combinatorics Research Group,
  E\"otv\"os Lor\'and Research Network,
  Department of Computer Science and Information Theory, Budapest
  University of Technology and Economics, 1117 Budapest, Magyar tud\'osok
  k\"or\'utja 2, Hungary.}

\makeatletter
\@namedef{subjclassname@2020}{\textup{2020} Mathematics Subject Classification}
\makeatother

\subjclass[2020]{Primary: 11B30, 15A06}
\keywords{Zero-sum sets, sum-full sets}

\begin{document}
\baselineskip=16pt

\begin{abstract}
Let $A$ be a finite, nonempty subset of an abelian group. We show that if
every element of $A$ is a sum of two other elements, then $A$ has a
nonempty zero-sum subset. That is, a (finite, nonempty) sum-full subset of
an abelian group is not zero-sum-free.
\end{abstract}

\maketitle

\section{Introduction: Sum-full vs Zero-sum-free}

A subset $A$ of an abelian group is called \emph{sum-full} if every element
of $A$ is a sum of two other elements, possibly equal to each other; that is,
in the standard notation, if $A\seq 2A$. The subset is \emph{zero-sum} if the
sum of its elements is equal to $0$; it is \emph{zero-sum-free} if it does
not contain itself a non-empty zero-sum subset.

In these terms, a problem posted in 2010 by Gjergji Zaimi at the MathOverflow
web site~\refb{z} reads:
\begin{center}\emph{
Can a finite, nonempty, sum-full set of real numbers be zero-sum-free?}
\end{center}
Similar or related problems appeared in~\cite{b:s,b:v,b:ud}.

The problem has attracted some attention from the mathematical community,
perhaps due to the fact that \emph{a priori}, there is no obvious link
between the properties of being sum-full and being zero-sum-free. Despite the
interest, the only publication spanned by this problem we are aware of
is~\refb{br}, by Taras Banakh and Alex Ravsky.

It is immediately seen that a finite, nonempty, sum-full, zero-sum-free set
cannot contain zero, and that it must contain at least two negative and at
least two positive numbers (which can be made \emph{three} with a minor
effort). Furthermore, considering subsets of any torsion-free abelian group,
not necessarily the additive group of real numbers, leads to an equivalent
problem. Indeed, there is no reason to confine ourselves to torsion-free
groups; the same question can be asked for any abelian group. Apart from
several basic observations of this sort, the problem remained wide open: no
counterexample in any group has ever been found, and virtually no progress
towards a nonexistence result has been made.

In this note we present a complete solution to this problem.
\begin{theorem}\label{t:main}
Let $A$ be a finite, nonempty subset of an abelian group. If $A$ is sum-full,
then it is not zero-sum-free; that is, if every element of $A$ is
representable as a sum of two other elements, then $A$ has a nonempty
zero-sum subset.
\end{theorem}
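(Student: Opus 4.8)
The plan is to exhibit the desired subset as the support of a $\{0,1\}$-valued vector in $\ker\phi$, where $A=\{a_1\longc a_n\}$ and $\phi\colon\mathbb Z^{n}\to G$ is the homomorphism determined by $\phi(e_i)=a_i$. We argue by induction on $n$. If $0\in A$ then $\{0\}$ is a zero-sum subset, so assume $0\notin A$; and if $A$ has a proper, nonempty, sum-full subset, the inductive hypothesis applied to it already produces a zero-sum subset of $A$, so we may assume $A$ is minimally sum-full --- in particular every element of $A$ occurs as a summand in some representation of some element.

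For each $i$ fix a representation $a_i=a_{j(i)}+a_{k(i)}$ with $j(i)\ne i$ and $k(i)\ne i$, so that $e_i-e_{j(i)}-e_{k(i)}\in\ker\phi$. Since $A\seq2A$ implies $A\seq kA$ for every $k\ge1$ (expand one summand of a $k$-fold sum at a time), a fixed $a\in A$ can be written as $\phi(v_k)$ with $v_k\in\mathbb Z_{\ge0}^{n}$ of $\ell^{1}$-norm $k$, for every $k\ge1$; the $v_k$ are pairwise distinct, so by Dickson's lemma (the poset $(\mathbb Z_{\ge0}^{n},\le)$ is a well-quasi-order) there are $k<k'$ with $v_k\le v_{k'}$, and then $m:=v_{k'}-v_k$ is a nonzero vector in $\mathbb Z_{\ge0}^{n}\cap\ker\phi$. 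Choose such an $m$ with $\sum_i m_i$ minimal; since $\phi(e_i)=a_i\ne0$ we have $\sum_i m_i\ge2$. Minimality makes $m$ \emph{representation-irreducible}: there is no valid representation $a_i=a_p+a_q$ (with $p,q\ne i$) such that $e_p+e_q\le m$ (read as $2e_p\le m$ when $p=q$), since otherwise $m-e_p-e_q+e_i$ would be a strictly shorter nonzero vector in $\mathbb Z_{\ge0}^{n}\cap\ker\phi$.

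If $m$ is $\{0,1\}$-valued we are done, as $\operatorname{supp}(m)$ is then a nonempty zero-sum subset of $A$; so suppose $m_{i_0}\ge2$ for some $i_0$. First assume $\operatorname{supp}(m)=\{1\longc n\}$. Then for every $i$ the chosen representation must have $j(i)=k(i)$ (else $e_{j(i)}+e_{k(i)}\le m$), say $a_i=2a_{w(i)}$, and moreover $m_{w(i)}\le1$ (else $2e_{w(i)}\le m$); thus $w$ maps $\{1\longc n\}$ into the proper subset $\{p:m_p=1\}$, so $w(i_1)=w(i_2)$ for some $i_1\ne i_2$, whence $a_{i_1}=2a_{w(i_1)}=2a_{w(i_2)}=a_{i_2}$ --- impossible, as the $a_i$ are distinct. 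Now assume $\operatorname{supp}(m)\subsetneq\{1\longc n\}$ and set $A':=\{a_i:i\in\operatorname{supp}(m)\}$, a nonempty proper subset of $A$. If $A'$ is sum-full, the inductive hypothesis gives a zero-sum subset of $A'\seq A$ and we are done; otherwise some element of $A'$ fails to be a sum of two other elements of $A'$, so all of its representations use an element of $A\stm A'$, and one must combine this with representation-irreducibility and with minimality of $A$ (every element of $A\stm A'$ being used somewhere, and $A$ having no proper sum-full subset) to derive a contradiction.

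The reductions, the construction of $m$ via Dickson's lemma, and the full-support case are routine; I expect the main obstacle to be this last case --- ruling out a minimal representation-irreducible zero-sum combination $m$ whose support spans a proper, non-sum-full subset $A'$ of $A$. A natural line of attack is to replace one occurrence of a ``bad'' element of $A'$ by the two summands of one of its representations (which lie partly outside $A'$), obtaining a longer zero-sum combination, and to re-run the minimization; arranging that this process terminates at a $\{0,1\}$-valued vector --- rather than merely cycling among vectors of the same $\ell^1$-norm --- is the delicate point.
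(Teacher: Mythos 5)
Your route is genuinely different from the paper's: instead of the paper's key lemma about the matrix of chosen representations (some subset of the rows of any $M\in\cM_n$ sums to a nonzero zero--one vector, proved by an induction that merges two columns), you look for a zero--one vector among the $\ell^1$-minimal nonzero elements of $\ker\phi\cap\mathbb Z_{\ge 0}^n$. Your reductions, the construction of $m$ via Dickson's lemma, the representation-irreducibility of a minimal $m$, and the full-support case are all correct. The gap is the final case, and it is not merely unfinished: the contradiction you hope to derive there does not exist. Take $G=\F_2^2$ and $A=\{(1,0),(0,1),(1,1)\}$. This $A$ is sum-full, minimally so, and $0\notin A$, so none of your reductions apply; but $2\cdot(1,0)=0$, so $2e_1\in\ker\phi\cap\mathbb Z_{\ge0}^3$ has $\ell^1$-norm $2$, is representation-irreducible (the only representation it could block would be some $a_j=2a_1=0$, and $0\notin A$), and is $\ell^1$-minimal, whereas the unique nonzero zero--one kernel vector is $e_1+e_2+e_3$, of norm $3$. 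Thus the minimal $m$ lands squarely in your last case, with $\operatorname{supp}(m)=\{1\}$ and $A'=\{(1,0)\}$ not sum-full, and every hypothesis you list holds while the configuration is perfectly realizable --- so no contradiction can follow from them. The same happens whenever some $a_i$ has small finite order, or more generally whenever $\ker\phi$ contains a short non-squarefree nonnegative vector (e.g.\ $1,-2\in A\seq\mathbb Z$ yields $2e_1+e_2$).

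The underlying problem is that you minimize over the full kernel of $\phi$, which contains ``accidental'' relations coming from the arithmetic of $G$ and having nothing to do with sum-fullness, so the sought zero--one vector need not be extremal there. The paper avoids this by restricting attention to $\{0,1\}$-combinations of the $n$ chosen relation vectors $e_{j(i)}+e_{k(i)}-e_i$ (the rows of its matrix $M$): its Lemma~1 asserts that some subset of these $n$ specific vectors already sums to a nonzero zero--one vector, a purely combinatorial statement independent of $G$, and the real work --- the column-merging induction --- happens there. Your ``replace a bad element by the two summands of one of its representations'' idea is in the right spirit (in the $\F_2^2$ example it turns $2e_1$ into $e_1+e_2+e_3$ in one step), but, as you acknowledge, you have no termination argument, and re-minimizing over the full kernel would immediately undo any progress. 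As it stands, the proposal has a genuine gap precisely where the difficulty of the problem is concentrated.
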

The proof of Theorem~\reft{main} is of combinatorial nature; it is both
surprisingly short and elementary, requiring nothing beyond very basic linear
algebra.

Let $\cM_n$ denote the set of all integer square matrices of order $n$ with
all elements on the main diagonal greater than or equal to $-1$, all elements
off the main diagonal non-negative, and all row sums equal to $1$. The
following lemma is at the heart of the proof of Theorem~\reft{main}.
\begin{lemma}\label{l:main}
For any matrix $M\in\cM_n$ there exist nonzero vectors $u,v\in\{0,1\}^n$ such
that $M^tu=v$; that is, there exists a system of rows of $M$ such that their
sum is a nonzero, zero-one vector.
\end{lemma}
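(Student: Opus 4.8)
The plan is to argue by induction on $n$, the case $n=1$ being trivial ($M=(1)$, take $u=v=(1)$). For the inductive step the starting observation is that $1$ is an eigenvalue of $M^{t}$: the row sums of $M$ are all $1$, so $M\mathbf 1=\mathbf 1$, hence $\det(M^{t}-I)=\det(M-I)=0$ and there is a nonzero integer vector $w$ with $M^{t}w=w$ (take any rational kernel vector and clear denominators). The crucial step is to upgrade $w$ to a \emph{nonnegative} eigenvector. Writing $w=w^{+}-w^{-}$ with $w^{\pm}$ the positive and negative parts, one checks that $z:=M^{t}w^{+}-w^{+}=M^{t}w^{-}-w^{-}$ is coordinatewise nonnegative: if $w_{j}\le 0$ then $z_{j}=(M^{t}w^{+})_{j}=\sum_{i:\,w_{i}>0}M_{ij}w_{i}$, and every index $i$ appearing here is distinct from $j$, so $M_{ij}\ge 0$; the case $w_{j}>0$ is symmetric, using the expression in terms of $w^{-}$. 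Since $\mathbf 1^{t}z=(M\mathbf 1)^{t}w^{+}-\mathbf 1^{t}w^{+}=0$, a nonnegative vector summing to $0$ must vanish, so $M^{t}w^{+}=w^{+}$ and $M^{t}w^{-}=w^{-}$; at least one of them is a nonzero nonnegative integer vector, call it $p$.

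Let $T$ be the support of $p$. If $T\ne\{1\longc n\}$, then for each $j\notin T$ the relation $0=p_{j}=\sum_{i}M_{ij}p_{i}$ is a sum of nonnegative terms, forcing $M_{ij}=0$ for every $i\in T$; hence the rows of $M$ indexed by $T$ are supported on $T$, the principal submatrix $M[T,T]$ has all row sums equal to $1$ and lies in $\cM_{|T|}$, and the induction hypothesis supplies a nonempty $S\seq T$ with $\sum_{i\in S}M_{ij}\in\{0,1\}$ for all $j\in T$. For $j\notin T$ the sum $\sum_{i\in S}M_{ij}$ vanishes (again since $M_{ij}=0$ for $i\in T$, $j\notin T$), so this $S$ works for $M$. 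Thus we may assume that \emph{every} nonnegative eigenvector of $M^{t}$ has full support; this forces $\ker(M^{t}-I)$ to be one-dimensional, spanned by a strictly positive primitive integer vector $p$.

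This remaining case is where the induction stops and a direct argument is needed; this is the main obstacle. It is convenient first to note that if some $M_{ii}\ne-1$, then (because $M_{ii}\ge 0$, the off-diagonal entries are nonnegative, and the $i$-th row sums to $1$) $M_{ii}\in\{0,1\}$ and the $i$-th row of $M$ is itself a zero-one vector, so $S=\{i\}$ already works. Hence we may assume all diagonal entries equal $-1$; writing $M=N-I$ with $N\ge 0$ of zero diagonal and all row sums $2$, the eigenvector relation becomes $N^{t}p=2p$, and the task becomes to find a nonempty $S$ with $\sum_{i\in S,\,i\ne j}N_{ij}\in\{1,2\}$ for $j\in S$ and $\sum_{i\in S}N_{ij}\le 1$ for $j\notin S$.

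A useful reformulation of the upper bounds in these conditions is that $S$ is \emph{admissible}, meaning $M^{t}\mathbf 1_{S}\le\mathbf 1$ coordinatewise (equivalently $N^{t}\mathbf 1_{S}\le\mathbf 1+\mathbf 1_{S}$); an admissible $S$ is of the desired type precisely when in addition $M^{t}\mathbf 1_{S}\ge 0$, and note that for any admissible $S$ the vector $M^{t}\mathbf 1_{S}$ automatically has entries in $\{-1,0,1\}$ and sums to $|S|$. The endgame is to take an admissible set $S$ of maximum size — a nonempty one exists (either some vertex has two distinct out-neighbours, whence that singleton is admissible, or else $N$ is twice a permutation-type matrix and any cycle of the permutation is admissible) — and to prove that maximality forces $M^{t}\mathbf 1_{S}\ge 0$. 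Concretely, a coordinate equal to $-1$ would sit at some $j_{0}\in S$ into which no vertex of $S$ points, and one must exhibit a vertex $v\notin S$ whose two out-edges avoid all currently saturated coordinates, contradicting maximality; making this counting argument work, with the weights $p_{i}$ controlling where the missing edges can land, is the technical crux of the lemma, everything preceding it being routine bookkeeping with the sign and row-sum constraints.
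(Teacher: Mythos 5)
Your preparatory reductions are correct and even elegant: the observation that $M^t$ has eigenvalue $1$, the positive/negative-part argument showing that $w^+$ and $w^-$ are separately fixed by $M^t$ (so that a nonzero \emph{nonnegative} integer eigenvector $p$ exists), the descent to a principal submatrix in $\cM_{|T|}$ when $p$ lacks full support, and the disposal of the case $m_{ii}\ge 0$ all check out. But none of this touches the actual difficulty of the lemma, and at the point where the difficulty begins you stop proving and start describing: you propose to take an admissible set $S$ of maximum size and assert that one ``must exhibit a vertex $v\notin S$ whose two out-edges avoid all currently saturated coordinates,'' then concede that ``making this counting argument work \dots is the technical crux of the lemma.'' That crux is the lemma. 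Nothing you have written shows that a maximum admissible $S$ satisfies $M^t\mathbf 1_S\ge 0$: if $j_0\in S$ receives no edge from $S$, every vertex outside $S$ pointing at $j_0$ might have its second out-edge landing on a coordinate of $S$ that is already saturated at $1$, and you give no mechanism (the weights $p_i$ are invoked but never used) to rule this out or to repair $S$ by a swap. So the proposal is an incomplete proof with a genuine gap at its central step, and it is not evident that the maximum-admissible-set strategy can be completed at all.

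For comparison, the paper's proof avoids any extremal choice and instead runs a clean induction with a step of size one or two. After reducing to the case where all diagonal entries are $-1$, it looks at column sums: if every column has off-diagonal sum $2$, the sum of \emph{all} rows is the all-one vector; if some column has off-diagonal sum $0$, delete that row and column and induct on $n-1$; and in the remaining case, where some column has off-diagonal sum $1$ (say $m_{21}=1$, $m_{23}=1$), it forms the matrix $M'\in\cM_{n-2}$ by \emph{adding column $2$ to column $3$} and deleting the first two rows and columns, applies the induction hypothesis to $M'$, and then patches the resulting row system --- possibly adjoining the row $r_2$ --- by a short case analysis on the partial sums $\sig_2,\sig_3$. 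That column-merging surgery is the idea your proposal is missing; if you want to salvage your approach you would need either to find a substitute for it or to actually prove the maximality claim you have only announced.
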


Deriving Theorem~\reft{main} from Lemma~\refl{main} is a matter of several
lines. Namely, given a finite, nonempty, sum-full subset $A=\{a_1\longc
a_n\}$ of an abelian group, for each $k\in[n]$ we fix a representation
$a_k=a_i+a_j$ with $i,j\in[n]$. Interpreting the resulting representations as
$n$-dimensional vectors, consider the matrix $M$ having these vectors as its
rows. (Thus, for instance, if in the representation $a_k=a_i+a_j$ the indices
$i,j,k$ are pairwise distinct, then $M$ contains three nonzero elements in
row $k$: the element on the main diagonal is equal to $-1$, and the elements
in columns $i$ and $j$ are equal to $1$.) Clearly, we have $M\in\cM_n$. By
the lemma, there is a nonzero vector $v=(v_1\longc v_n)\in\{0,1\}^n$
representable as a linear combination of the rows of $M$. On the other hand,
by the construction, every row of $M$ is orthogonal to the vector
$a:=(a_1\longc a_n)$; hence, $v$ is orthogonal to $a$, too. Denoting by $S$
the set of those indices $k\in[n]$ with $v_k=1$, we then get $\sum_{k\in S}
a_k=0$, showing that $A$ is not zero-sum-free.

We prove Lemma~\refl{main} in the next section. The proof is inductive; this
is why the class $\cM_n$ is slightly wider than the class of matrices which
can actually arise from sum-full sets.

Although Theorem~\reft{main} solves the original problem completely, in
Section~\refs{char3} we present yet another solution for the special case of
the elementary abelian $3$-groups. The reason for presenting a dedicated
solution in this particular case is that the solution is based on a totally
different approach relating sum-full sets and Sidon sets. Moreover, the
central lemma used in this case is in fact valid for any abelian group.
Postponing the details to Section~\refs{char3}, here we confine ourselves to
the remark that for the elementary abelian $2$-groups, the assertion of
Theorem~\reft{main} is almost immediate.

Finally, we note that Theorem~\reft{main} stays true in the more general
settings of \emph{sequences} (multisets) instead of sets. This follows by
observing that repeated elements can be removed from a set without acquiring
or violating the sumfullness property.

\section{Proof of Lemma~\refl{main}}\label{s:mainproof}

Our goal in this section is to prove Lemma~\refl{main}, and hence
Theorem~\reft{main}.

We use induction on $n$. For $n\le 2$ the assertion is easy to verify; assume
thus that $n\ge 3$. Write $M=(m_{ij})_{1\le i,j\le n}$ and for $i\in[n]$,
denote by $r_i$ the $i$th row of $M$.

If there exists $i\in[n]$ with $m_{ii}\ge 0$, then $r_i$ is a nonzero
zero-one vector; considering the row sum including $r_i$ as its only summand,
we get the assertion. We therefore assume that $m_{ii}=-1$ for all $i\in[n]$.

By the assumption just made, the sum of all off-diagonal elements of $M$ is
$2n$. If the sum in every column is $2$, then the sum of all the rows of $M$
is the all-one vector, completing the proof in this case. Assume that there
is a column with the sum of its off-diagonal elements $0$ or $1$. Changing
the order of the columns if necessary, we further assume that the first
column has this property.

If the sum of all off-diagonal elements of the first column is $0$, then the
first column contains only zero elements except that $m_{11}=-1$, and we
apply the induction hypothesis to the matrix obtained from $M$ by removing
its first row and first column. This yields a system of rows of this new
matrix with a nonzero zero-one sum. Prefixing $0$ at the beginning of each of
these rows, we get a system of rows of the \emph{original} matrix $M$ with a
nonzero, zero-one sum, as wanted.

Hence, we can assume that the sum of the off-diagonal elements of the first
column of $M$ is $1$; that is, one of the off-diagonal elements is $1$, while
the rest vanish. Without loss of generality, $m_{21}=1$ while $m_{j1}=0$ for
$j\in[3,n]$. In addition to the element $m_{21}=1$, the second row of $M$
contains an element equal to $1$ in a column other than the first or the
second one, and we assume, for definiteness, that it is the third column:
$m_{23}=1$. With all the assumptions made so far, $M$ can be visualized as
follows:
\newcommand{\s}{\ast}
\begin{equation*}
  M = \left(
  \begin{array}{rr|rrrr}
     -1 & \s\ & \s & \s & \dotsb & \s \\
      1 & -1\ &  1 &  0 & \dotsb & 0 \\
      \hline
      0 & \s\ & -1 & \s & \dotsb & \s \\
      0 & \s\ & \s & -1 & \dotsb & \s \\
      \vdots & \vdots\ & \vdots & \vdots & \ddots & \vdots \\
      0 & \s\ & \s & \s & \dotsb & -1
  \end{array}
  \right)
\end{equation*}

We now come to the crucial stage of the argument. Consider the square matrix
$M'$ of order $n-2$ obtained from $M$ by adding its second column to the
third one, and then removing the first two columns and first two rows of the
resulting matrix. Numbering the rows and columns of $M'$ from $3$ to $n$, we
write $M'=(m_{ij}')_{3\le i,j\le n}$.

Recalling that by $r_i$ we have denoted the $i$th row of $M$, let $r_i'$
denote the $i$th row of $M'$. It is easily seen that $M'\in\cM_{n-2}$; hence,
by the induction hypothesis, there is a subset $I\seq[3,n]$ such that the
vector $\sum_{i\in I}r_i'$ is nonzero and zero-one.

We remark that $m_{ii}'=m_{ii}=-1$ for all $i\in[4,n]$, but $m_{33}'\ge 0$ is
possible (this happens if and only if $m_{32}>0$). That is, all elements on
the main diagonal of $M'$, with the possible exception of $m_{33}'$, are
equal to $-1$.

For $j\in\{2,3\}$ let $\sig_j:=\sum_{i\in I}m_{ij}$. If $3\notin I$, then
both $\sig_2$ and $\sig_3$ are sums of non-negative terms; hence, are
nonnegative themselves. Also, $\sig_2+\sig_3=\sum_{i\in I}m'_{i3}\in\{0,1\}$.
This shows that $\sig_2,\sig_3\in\{0,1\}$, and, as a result,
 $\sum_{i\in I}r_i$ is a (nonzero) zero-one vector, as wanted.

We thus assume that $3\in I$. In this case we have $\sig_2+\sig_3\in\{0,1\}$
and $\sig_2\ge 0$. If also $\sig_3\ge 0$, then $\sig_2,\sig_3\in\{0,1\}$ and
we reach again the conclusion that $\sum_{i\in I}r_i$ is nonzero and
zero-one.

We are left with the situation where $3\in I$ and $\sig_3<0$. In this case we
have $\sig_2\in\{1,2\}$ and $\sig_3=-1$, and it follows that
 $r_2+\sum_{i\in I} r_i$ is a zero-one vector.

\section{The elementary abelian $3$-groups}\label{s:char3}

In this section we prove a version of Theorem~\reft{main} restricted to the
elementary abelian $3$-groups.
\begin{theorem}\label{t:char3}
Let $A$ be a finite, nonempty subset of an elementary abelian $3$-group. If
$A$ is sum-full, then it is not zero-sum-free.
\end{theorem}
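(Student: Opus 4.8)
To prove Theorem~\reft{char3}, the plan is to extract the combinatorial heart of the problem into a lemma valid for \emph{any} abelian group, and then to exploit one feature special to characteristic $3$: that a three-term arithmetic progression is itself a zero-sum set. Recall that a set $B$ in an abelian group is a \emph{Sidon set} if the only solutions of $x+y=z+w$ with $x,y,z,w\in B$ are the trivial ones, $\{x,y\}=\{z,w\}$ as multisets. The lemma I would isolate is:
\begin{lemma}\label{l:sidon}
A finite, nonempty, sum-full Sidon subset of an abelian group is not zero-sum-free.
\end{lemma}

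For the proof of Lemma~\refl{sidon}, suppose such an $A$ were zero-sum-free. Then $0\notin A$ (else $\{0\}$ is a zero-sum subset). Fix for each $a\in A$ a representation $a=\sig(a)+\tau(a)$ with $\sig(a),\tau(a)\in A$, and iterate $\sig$ from an arbitrary $a_0\in A$: put $a_{k+1}:=\sig(a_k)$, so that $a_k=a_{k+1}+\tau(a_k)$ for all $k$; note $a_k\ne a_{k+1}$, since $a_k=a_{k+1}$ would give $\tau(a_k)=0\notin A$. Let $m$ be least with $a_m\in\{a_0\longc a_{m-1}\}$, say $a_m=a_\ell$; then $a_0\longc a_{m-1}$ are pairwise distinct, and telescoping the recursion around the cycle gives $\tau(a_\ell)\longp\tau(a_{m-1})=0$. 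If these summands were pairwise distinct we would be done, as $\{\tau(a_\ell)\longc\tau(a_{m-1})\}$ would be a nonempty zero-sum subset of $A$. But a repetition $\tau(a_i)=\tau(a_j)$ with $\ell\le i<j\le m-1$ yields, after subtracting $a_i=a_{i+1}+\tau(a_i)$ from $a_j=a_{j+1}+\tau(a_j)$, the identity $a_i+a_{j+1}=a_{i+1}+a_j$, so the Sidon property forces $\{a_i,a_{j+1}\}=\{a_{i+1},a_j\}$ as multisets; since $a_i\ne a_{i+1}$ this gives $a_i=a_j$, contradicting distinctness. Hence the summands are distinct and we obtain the desired zero-sum subset.

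Now to Theorem~\reft{char3}. Let $A$ be a finite, nonempty, sum-full subset of an elementary abelian $3$-group, and suppose it is zero-sum-free; the goal is to show $A$ is a Sidon set, whereupon Lemma~\refl{sidon} gives a contradiction. In an elementary abelian $3$-group, any three distinct elements $x,y,z$ with $x+y+z=0$ already form a nonempty zero-sum subset, and $2x=y+z$ is equivalent there to $x+y+z=0$; hence zero-sum-freeness forbids every relation $2x=y+z$ with $x,y,z$ distinct. This rules out all violations of the Sidon condition except those arising from an equation $x+y=z+w$ with $x,y,z,w$ pairwise distinct. Excluding these last, \emph{genuine}, four-term coincidences is the remaining --- and crucial --- step, and it is where the sum-full hypothesis must enter.

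The step I expect to be the main obstacle is exactly this one. Zero-sum-freeness alone does not preclude a four-term coincidence $x+y=z+w$ with all four elements distinct (it kills $2x=y+z$ and longer linear zero-sum patterns, but a priori not this), so the sum-full structure is indispensable. The plan is to assume such a coincidence, set $a:=x+y=z+w$, and combine the two representations of $a$ with the fixed sum-full representations of $x,y,z,w$ (and of $a$ itself, should $a\in A$), using the characteristic-$3$ identity $-t=t+t$ together with the already-established absence of three-term progressions in $A$, so as to force either a short zero-sum subset or an outright contradiction. Carrying this reduction through cleanly --- i.e.\ showing that a zero-sum-free, sum-full set in an elementary abelian $3$-group is perforce a Sidon set --- is the technical core of the argument; once it is in place, Lemma~\refl{sidon} finishes the proof in a single line.
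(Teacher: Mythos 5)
Your lemma is correct, and its proof is sound; it is in fact the special case $H=\{0\}$ of the paper's Lemma~\refl{bb2} (stated in contrapositive form), proved by the same chain-and-telescope argument. The problem is the reduction of Theorem~\reft{char3} to that lemma, which you explicitly leave as a plan and which cannot be completed in the form proposed. You want to show that a sum-full, zero-sum-free subset $A$ of an elementary abelian $3$-group is Sidon. But the very argument you use to prove your lemma shows the opposite: run the chain $a_k=a_{k+1}+\tau(a_k)$ on a hypothetical sum-full, zero-sum-free $A$ \emph{without} assuming Sidon. Zero-sum-freeness forces a repetition $\tau(a_i)=\tau(a_j)$ around the cycle, and this repetition yields $a_i+a_{j+1}=a_{i+1}+a_j$ with $\{a_i,a_{j+1}\}\ne\{a_{i+1},a_j\}$ (since $a_i\ne a_{i+1}$ and $a_i\ne a_j$) --- that is, a nontrivial additive quadruple. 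So every sum-full, zero-sum-free set is provably \emph{not} Sidon; this is exactly Lemma~\refl{bb2} with $H=\{0\}$. Consequently your intermediate claim ``$A$ is Sidon'' is true only vacuously, and establishing it is equivalent to proving the theorem itself: no amount of massaging the four-term coincidence with characteristic-$3$ identities can succeed, because such a coincidence is guaranteed to exist as long as $A$ does. The hard part of the proof is therefore untouched.

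The paper escapes this trap by never asking $A$ itself to be Sidon. It proves the relative version of your lemma (Lemma~\refl{bb2}): for any \emph{proper subgroup} $H$, the nontrivial quadruple can be found inside $A\stm H$. Hence (Corollary~\refc{complement}) for any Sidon subset $B\seq A$ --- and small subsets such as $\{a,b,c\}$ with $a=b+c$, or a basis, are automatically Sidon --- the complement $A\stm B$ must still be generating. Combining this with Olson's bound $|A|\le 2\dim V$ (Lemma~\refl{o}) yields a dimension count that cannot be satisfied: choosing a basis $B$ inside the generating set $A\stm\{a,b,c\}$, the set $A\stm B$ has at most $\dim V$ elements, three of which ($a$, $b$, $c$) are linearly dependent, so it cannot generate. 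If you want to salvage your approach, that is the direction to take: exploit Sidon-ness of small subsets of $A$ together with a counting argument, rather than Sidon-ness of $A$ itself.
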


Here is our main lemma.
\begin{lemma}\label{l:bb2}
Let $A$ be a finite, generating, sum-full, zero-sum-free subset of an abelian
group. Then for any proper subgroup $H$, there are $a_1,a_2,a_3,a_4\in A\stm
H$ with $a_1+a_2=a_3+a_4$ and $\{a_1,a_2\}\ne\{a_3,a_4\}$.
\end{lemma}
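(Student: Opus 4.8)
The plan is to construct the required quadruple directly, using zero-sum-freeness of $A$ only to discard one degenerate case; in particular the argument will stay within the hypotheses of the lemma and not appeal to Theorem~\reft{main}. Write $B:=A\stm H$. Since $A$ generates the ambient group while $H$ is proper, we have $A\not\seq H$, whence $B\ne\emptyset$. For each $a\in B$ I would fix a representation $a=b+c$ with $b,c\in A$; both summands differ from $a$, for if, say, $b=a$ then $c=0\notin A$ (note $0\notin A$, as $A$ is zero-sum-free). Not both of $b,c$ can lie in $H$, since otherwise $a=b+c\in H$; so I choose a summand lying in $B$, call it $f(a)$, and write $g(a)\in A$ for the other one. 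Thus $a=f(a)+g(a)$ with $f(a)\in B$ and $f(a)\ne a$, so $f$ is a fixed-point-free self-map of the finite set $B$.

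Next I would extract an $f$-cycle: iterating $f$ from any element of $B$ must eventually become periodic, so there are pairwise distinct $a_1,\dotsc,a_k\in B$ with $a_{t+1}=f(a_t)$ for every $t$ (indices read modulo $k$); since $f$ has no fixed point, $k\ge 2$. From $a_t=f(a_t)+g(a_t)=a_{t+1}+g(a_t)$ we get $g(a_t)=a_t-a_{t+1}$, and summing around the cycle yields $\sum_{t=1}^{k}g(a_t)=0$.

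Finally I would split into two cases. If $g(a_1),\dotsc,g(a_k)$ are pairwise distinct, then $\{g(a_1),\dotsc,g(a_k)\}$ is a nonempty zero-sum subset of $A$, contradicting zero-sum-freeness; hence this case cannot occur. So $g(a_i)=g(a_j)$ for some $i\ne j$, i.e.\ $a_i-a_{i+1}=a_j-a_{j+1}$, which rearranges to $a_i+a_{j+1}=a_j+a_{i+1}$. It remains to verify $\{a_i,a_{j+1}\}\ne\{a_j,a_{i+1}\}$: equality of these two sets would force either $a_i=a_j$ — impossible, as $a_1,\dotsc,a_k$ are distinct and $i\ne j$ — or $a_i=a_{i+1}$ — impossible, since $a_{i+1}=f(a_i)\ne a_i$. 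As $a_i,a_{i+1},a_j,a_{j+1}$ all lie in $B=A\stm H$, these four elements witness the lemma.

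I expect the only delicate point to be organisational rather than computational: one must build the self-map so that the entire cycle lives inside $B$ — this is precisely what places all four elements of the final relation outside $H$ automatically — and one must ensure the coincidence $g(a_i)=g(a_j)$ produces a non-degenerate relation, which reduces to the cycle being simple together with $f$ being fixed-point-free. A minor point worth flagging is that the telescoping identity $\sum_t g(a_t)=0$ is invoked as a contradiction exactly in the situation where the $g(a_t)$ are distinct, which is what makes $\{g(a_1),\dotsc,g(a_k)\}$ a genuine set (not a multiset) with vanishing sum.
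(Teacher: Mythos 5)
Your proof is correct and is essentially the paper's own argument: the paper likewise iterates representations $a_t=a_{t+1}+b_t$ with $a_t\notin H$ until the sequence repeats, telescopes to get $b_i+\dots+b_{j-1}=0$, invokes zero-sum-freeness to force a coincidence $b_\sigma=b_\tau$, and rules out the same two degenerate cases. Your packaging via a fixed-point-free self-map $f$ of $A\setminus H$ and an $f$-cycle is only a cosmetic reorganization of that argument.
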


\begin{proof}
Since $A$ is generating and $H$ is proper, there is an element $a_1\in A\stm
H$. Since $A$ is sum-full, there are $a_2,b_1\in A$ with $a_1=a_2+b_1$. In
view of $a_1\notin H$, at least one of $a_2$ and $b_1$ is not in $H$;
switching $a_2$ and $b_1$ if needed, we assume that $a_2\notin H$. Iterating,
we get an infinite chain of equalities
\begin{align*}
  a_1 & = a_2 + b_1, \\
  a_2 & = a_3 + b_2, \\
  \quad & \quad \vdots
\end{align*}
with $a_1,a_2,\dotsc\in A\stm H$ and $b_1,b_2,\dotsc\in A$. Since $A$ is
finite, there are indices $1\le i<j$ such that $a_i=a_j$, while $a_s\ne a_t$
whenever $i\le s<t\le j-1$. Taking the sum of the corresponding equalities,
after a cancellation we obtain
  $$ b_i \longp b_{j-1} = 0. $$

Since $A$ is zero-sum-free, the summands in the left-hand side are not
pairwise distinct; thus we have, say, $b_\sig=b_\tau$ with
 $i\le\sig<\tau\le j-1$. This leads to
  $$ a_\sig - a_{\sig+1} = b_\sig = b_\tau = a_\tau - a_{\tau+1} $$
and then
  $$ a_\sig + a_{\tau+1} = a_{\sig+1} + a_\tau $$
proving the assertion unless either $a_\sig=a_\tau$, or $a_\sig=a_{\sig+1}$
hold. However, in the former case we get a contradiction with the assumption
$a_s\ne a_t$ ($i\le s<t\le j-1$), while in the latter case we have
$b_\sig=a_\sig-a_{\sig+1}=0$, contradicting the assumption that $A$ is
zero-sum-free.
\end{proof}

A subset $S$ of an abelian group is called a \emph{Sidon set} if it does not
contain nontrivial additive quadruples; that is, if an equality
$s_1+s_2=s_3+s_4$ with $s_1,s_2,s_3,s_4\in S$ is possible only for
$\{s_1,s_2\}=\{s_3,s_4\}$. Clearly, any independent set in a vector space,
and in particular any basis of a vector space, is Sidon.

The following corollary of Lemma~\refl{bb2} shows that a sum-full,
zero-sum-free set cannot be ``too close'' to a Sidon set.
\begin{corollary}\label{c:complement}
Suppose that $A$ is a finite, generating, sum-full, zero-sum-free subset of
an abelian group. If $B\seq A$ is Sidon (in particular, if $B$ is minimal
generating), then $A\stm B$ is generating.
\end{corollary}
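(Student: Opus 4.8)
The plan is to argue by contradiction: assume that $A\stm B$ does not generate, and set $H:=\langle A\stm B\rangle$, which is then a proper subgroup of the ambient group. The hypotheses imposed on $A$ in the corollary are precisely those of Lemma~\refl{bb2} (finite, generating, sum-full, zero-sum-free), so I would apply that lemma to the proper subgroup $H$. This produces elements $a_1,a_2,a_3,a_4\in A\stm H$ with $a_1+a_2=a_3+a_4$ and $\{a_1,a_2\}\ne\{a_3,a_4\}$, i.e.\ a nontrivial additive quadruple drawn from $A\stm H$.

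The decisive step is then the trivial-looking inclusion $A\stm H\seq B$: indeed $A\stm B\seq\langle A\stm B\rangle=H$, so every element of $A$ lying outside $H$ must belong to $B$. Hence $a_1,a_2,a_3,a_4\in B$, so $B$ itself contains a nontrivial additive quadruple, contradicting the assumption that $B$ is Sidon. This contradiction establishes that $A\stm B$ is generating. The parenthetical assertion is subsumed once we note that a minimal generating subset of a vector space — in particular of an elementary abelian $3$-group, which is the setting of Section~\refs{char3} where this is used — is a basis, hence independent, hence Sidon by the remark preceding the corollary.

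I do not anticipate a genuine obstacle here: granted Lemma~\refl{bb2}, the corollary collapses to the single observation $A\stm H\seq B$, and the only things needing a line of care are that $A$ meets all four hypotheses of the lemma and that $H$ is proper — both immediate from the standing assumptions, the latter being exactly the negation of the desired conclusion. If there is any delicate point at all it is expository: making it transparent that ``Sidon'' is precisely the property that clashes with the nontrivial additive quadruple handed to us by Lemma~\refl{bb2}.
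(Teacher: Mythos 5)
Your argument is exactly the paper's: apply Lemma~\refl{bb2} to $H=\langle A\stm B\rangle$ and note $A\stm H\seq A\stm(A\stm B)=B$, so the nontrivial additive quadruple lands inside the Sidon set $B$, a contradiction. The proposal is correct and takes essentially the same approach as the paper.
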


For the proof, just apply the lemma to the subgroup $H$ generated by the set
$A\stm B$: having $H$ proper would produce a direct contradiction with the
assertion of the lemma in view of $A\stm H\seq A\stm(A\stm B)=B$.

The notion of a zero-sum-free set extends literally onto \emph{sequences}
with repeated elements allowed. Namely, a sequence is zero-sum-free if it
does not contain nonempty zero-sum subsequences. The following classical
result of Olson~\refb{o} establishes an upper bound for the largest possible
size of a zero-sum-free sequence in a finite vector space.
\begin{lemma}[Olson]\label{l:o}
Let $A$ be a finite sequence of vectors of the finite vector space $V$ over
the $p$-element field, with a prime $p$. If $A$ is zero-sum-free, then
$|A|\le (p-1)\dim V$.
\end{lemma}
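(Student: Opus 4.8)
The plan is to work in the group algebra $R:=\F_p[V]$ and convert zero-sum-freeness into the non-vanishing of a certain product, then bound its length via nilpotency of the augmentation ideal. Write $n:=\dim V$ and $A=(a_1\longc a_k)$, and for $g\in V$ let $[g]\in R$ denote the corresponding basis element, so that $[g][h]=[g+h]$ and $[0]=1$. Consider the product
$P:=\prod_{i=1}^{k}\bigl(1-[a_i]\bigr)\in R$,
and the idea is to sandwich $P$: it is nonzero because $A$ is zero-sum-free, but it lies in a high power of the augmentation ideal, which forces $k$ to be small.

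For the nonvanishing of $P$, I would expand: $P=\sum_{S\seq[k]}(-1)^{|S|}[\sig(S)]$, where $\sig(S):=\sum_{i\in S}a_i$. Collecting terms, the coefficient of $[0]=1$ in $P$ equals $\sum_{S:\ \sig(S)=0}(-1)^{|S|}$, and since $A$ is zero-sum-free the only index set $S$ with $\sig(S)=0$ is $S=\varnothing$. Hence this coefficient is $1\in\F_p$, so $P\ne0$.

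For the other side, each factor $1-[a_i]$ lies in the augmentation ideal $I:=\ker(\epsilon)$, where $\epsilon\colon R\to\F_p$ sends every $[g]$ to $1$; thus $P\in I^{k}$. The key step is the claim that $I^{(p-1)n+1}=0$. This I would obtain from the isomorphism $R\cong\F_p[y_1\longc y_n]/(y_1^{p}\longc y_n^{p})$ given, after fixing a basis $e_1\longc e_n$ of $V$, by $[e_i]\mapsto1+y_i$; here one uses $[e_i]^p=[0]=1$ and, in characteristic $p$, $((1+y_i)-1)^p=(1+y_i)^p-1=0$. Under this isomorphism $I$ corresponds to $(y_1\longc y_n)$, and $I^{m}$ is spanned by the monomials $y_1^{\alp_1}\cdots y_n^{\alp_n}$ with all $\alp_j\le p-1$ and $\alp_1\longp\alp_n\ge m$; there are no such monomials once $m>(p-1)n$.

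Combining the two bounds finishes it: if $k\ge(p-1)n+1$ then $P\in I^{k}\seq I^{(p-1)n+1}=0$, contradicting $P\ne0$; hence $k\le(p-1)n$. I expect the only point needing care to be the description of $R$ as a truncated polynomial algebra and the accompanying degree count that pins down the nilpotency exponent of $I$ — everything else is a short formal manipulation in the group ring, and the zero-sum-free hypothesis enters only through the single clean statement that $\sig(S)=0$ forces $S=\varnothing$.
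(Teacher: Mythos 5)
Your argument is correct. The expansion $P=\sum_{S\seq[k]}(-1)^{|S|}[\sig(S)]$ is valid, zero-sum-freeness does pin the coefficient of $[0]$ to $1\ne 0$ in $\F_p$, the identification $\F_p[V]\cong\F_p[y_1\longc y_n]/(y_1^p\longc y_n^p)$ via $[e_i]\mapsto 1+y_i$ is a genuine isomorphism (both sides have dimension $p^n$ and $(1+y_i)^p=1$), and the degree count gives $I^{(p-1)n+1}=0$, so $k\le(p-1)n$ follows.

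The comparison with the paper is a little unusual here: the paper does not prove this lemma at all. It cites Olson's 1969 theorem that for a finite abelian $p$-group with invariants $p^{\alp_1}\longc p^{\alp_m}$ the largest zero-sum-free sequence has size $p^{\alp_1}\longp p^{\alp_m}-m$, and remarks that the stated bound is the special case of an elementary abelian group. Your group-ring argument is essentially the classical proof of that result (indeed it is close to Olson's own method), specialized to exponent $p$, where the truncated polynomial description of the group algebra is cleanest. What your route buys is a short, self-contained proof of exactly the statement the paper needs, with the zero-sum-free hypothesis entering in one transparent place; what the citation buys is the stronger result for arbitrary finite abelian $p$-groups at no extra cost. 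Either is perfectly adequate for the role the lemma plays in the proof of Theorem~2.
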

Indeed, Olson has shown that for the finite abelian $p$-group with the
invariants $p^{\alp_1}\longc p^{\alp_m}$, the largest size of a zero-sum-free
sequence is $p^{\alp_1}\longp p^{\alp_m}-m$; this easily implies
Lemma~\refl{o}.
% A particularly nice and short proof of Lemma~\refl{o} based
% on the Chevalley-Warning theorem was given by van Emde Boas~\refb{b}.

\begin{proof}[Proof of Theorem~\reft{char3}]
We consider the underlying group as a vector space over the field $\F_3$ and
denote it $V$.

Suppose that the theorem is wrong, and let $A\seq V$ be a counterexample;
that is, $A$ does not contain a zero-sum subset, while every element of $A$
is a sum of two other elements. Without loss of generality we assume that $A$
generates $V$; therefore, $V$ is finite.

% Write $m:=\dim V$ and $n:=|A|$; thus,
% We have $|A|\le 2\dim V$ by Lemma~\refl{o}.
Choose arbitrarily an element $a\in A$ and find a representation $a=b+c$ with
some (not necessarily distinct) $b,c\in A$.
% (the situation where $b=c$ is not excluded here).
It is easily verified that the set $\{a,b,c\}$ is Sidon; hence, by
Corollary~\refc{complement}, its complement $A\stm\{a,b,c\}$ is generating.
% (We do not exclude here the situation where $b=c$ and $\{a,b,c\}$ is a
% two-element set).
Fix a basis $B\seq A\stm\{a,b,c\}$. Applying Corollary~\refc{complement} a
second time, the complement $A\stm B$ is generating. Since some of the
vectors of $A\stm B$ (namely, $a,b$, and $c$) are linearly dependent, we have
  $$ \dim V < |A\stm B| = |A| - \dim V $$
whence $|A|>2\dim V$, contradicting Lemma~\refl{o}.
\end{proof}

\section*{Acknowledgement}

The first-named author is grateful to Mikhail Muzychuk for his interest and
fruitful conversations.

%\vfill

\bigskip
\end{document}